\begin{document}
\title{On some results on Practical Numbers}

\newtheorem{theorem}{Theorem}[section]
\newtheorem{definition}[theorem]{Definition}
\newtheorem{question}[theorem]{Question}
\newtheorem{lemma}[theorem]{Lemma}
\newtheorem{problem}[theorem]{Problem}
\newtheorem{corollary}[theorem]{Corollary}
\newtheorem{example}[theorem]{Example}
\newtheorem{proposition}[theorem]{Proposition}

\newtheorem{NoPolynomialRepresentation}[theorem]{Proposition}
\newtheorem{ArithmeticProgressions}[theorem]{Theorem}

\newcommand{\modd}[1]{\ \normalfont{\text{(mod }}#1)}
\newcommand{\F}[1]{\left\lfloor#1\right\rfloor}
\author{Sai Teja Somu, Ting Hon Stanford Li \and Andrzej Kukla}
\date{\today}
\subjclass[2020]{Primary 11B83; Secondary 11D85}
\keywords{Practical Numbers, polynomial representations.}
\maketitle
\begin{abstract}
A positive integer $n$ is said to be a practical number if every integer in $[1,n]$ can be represented as the sum of distinct divisors of $n$. In this article, we consider practical numbers of a given polynomial form. We give a necessary and sufficient condition on coefficients $a$ and $b$ for there to be infinitely many practical numbers of the form $an+b$. We also give a necessary and sufficient for a quadratic polynomial to contain infinitely many practical numbers, using which we solve first part of a conjecture mentioned in \cite{Wu}. In the final section, we prove that every number of $8k+1$ form can be expressed as a sum of a practical number and a square, and for every $j\in \{0,\ldots,7\}\setminus \{1\}$ there are infinitely many natural numbers of $8k+j$ form which cannot be written as sum of a square and a practical number. 
\end{abstract}
\section{Introduction}
 A positive integer $n$ is said to be a practical number if every integer in $[1,n]$ can be expressed as the sum of distinct divisors of $n$. Practical numbers were introduced by Srinivasan in \cite{Srinivasan}. In \cite{Stewart}, Stewart showed that $n=p_1^{\alpha_1}p_2^{\alpha_2}\cdots p_k^{\alpha_k}$, where $p_1<p_2<\cdots<p_k$ are primes and integers $\alpha_i\geq 1$ is practical if either $n=1$ or $p_1=2$ and for all $2\leq i \leq k$ we have $p_i\leq \sigma(p_1^{\alpha_1}p_2^{\alpha_2}\cdots p_{i-1}^{\alpha_{i-1}})+1$ (here $\sigma$ denotes the sum of divisors function). This implies that if $m$ is a practical number then for all natural numbers $n\leq \sigma(m)+1$, $mn$ is a practical number.  We will be using this property several times in this article.
 
 There are many properties of practical numbers that are similar to that of properties of prime numbers. All practical numbers except for $1$ are even while all primes except for $2$ are odd. Let $\pi(x)$ and $P(x)$ denote number of primes less than or equal to $x$ and number of practical numbers less than or equal to $x$ respectively. From prime number theorem, we have $\pi(x)\sim \frac{x}{\log x}$ and Weingartner in \cite{Weingartner} showed that there exists a positive constant $c$ such that $P(x)\sim \frac{cx}{\log x}$.
 
 Margenstern in \cite{Margenstern} made a Goldbach-type conjecture that every even positive integer can be expressed as  sum of two practical numbers. In the same paper, he conjectured that there are infinitely many positive integers $m$ such that $m-2,m,m+2$ are all practical numbers. Melfi proved both the conjectures in \cite{Melfi}.
 
 In Section 2, we consider practical numbers of a specific form. We prove that there exists no non-constant polynomial $P(x)$ with integer coefficients such that $P(n)$ is a practical number for all natural numbers $n$. We give a necessary and sufficient condition for any linear polynomial for containing $0$ or $1$ or infinitely many practical numbers. Later we consider quadratic representations of practical numbers. We could prove first part of Conjecture 1.1 of \cite{Wu} using our results.
 
 In Section 3, we prove that every number of the form $8k+1$ can be represented as a sum of a square and a practical number. We also show that for all $j\in \{0,\cdots,7\}$ and $j\neq 1$ there exist infinitely many natural numbers $k$ such that $8k+j$ is not a practical number.

\section{Practical Numbers of a specific form}

Goldbach showed that no non-constant polynomial with integer coefficients can be a prime for all integer values (see \cite{Hardy} and \cite{Nagell}). We show an analogous result for practical numbers.   

\begin{NoPolynomialRepresentation}
For every non-constant polynomial $P$ with integer coefficients and positive leading coefficient there exists a natural number $n$ such that $P(n)$ is not a practical number.
\end{NoPolynomialRepresentation}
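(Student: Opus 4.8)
The plan is to reduce first to the case where $P(n)$ is even for every $n$, and then to use the Chinese Remainder Theorem to build an argument $n$ at which $P(n)$ violates Stewart's chain condition at one of its prime factors. The point is that no information about \emph{large} prime factors of polynomial values is needed: it suffices to control finitely many small primes and let one ordinary prime factor of a cofactor fall beyond a fixed constant.

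First I would dispose of the odd case. Since the residue of $P(n)$ modulo $2$ depends only on $n\bmod 2$, if $P$ takes an odd value on some class $n\equiv n_0\pmod 2$ then it is odd on the whole class; as $P$ has positive leading coefficient, $P(n)\to\infty$ along that class, so some $n$ yields an odd value exceeding $1$, which cannot be practical. Hence I may assume $2\mid P(n)$ for all $n$. Next I isolate the forced primes: call $\ell$ \emph{forced} if $\ell\mid P(n)$ for every $n$. There are only finitely many (each divides the fixed divisor $\gcd_n P(n)$, a positive integer), and $2$ is forced. For each forced $\ell$ set $c_\ell=\min_n v_\ell(P(n))\ge 1$, put $D=\prod_{\ell\ \mathrm{forced}}\ell^{c_\ell}$ (a fixed even integer), and let $B=\sigma(D)+1$. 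I would then invoke two standard facts: that $v_\ell(P(n))$ equals its minimum $c_\ell$ on the progression $n\equiv n^*\pmod{\ell^{c_\ell+1}}$ for a suitable $n^*$, and that for a non-forced prime $\ell\le B$ one may pick a residue with $\ell\nmid P(n)$. Using the Chinese Remainder Theorem over the pairwise coprime moduli $\ell^{c_\ell+1}$ with $\ell\le B$, I select one progression on which $v_\ell(P(n))=c_\ell$ simultaneously for every prime $\ell\le B$. Since a forced prime $\ell$ satisfies $\ell\le D\le\sigma(D)<B$, no forced prime is missed, and on this progression $P(n)=D\cdot R(n)$ with $R(n)$ coprime to every prime $\le B$.

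For large $n$ in this progression, $R(n)=P(n)/D>1$ and its smallest prime factor $q$ satisfies $q>B=\sigma(D)+1$. In the factorization of $P(n)$ the primes strictly below $q$ are exactly those of $D$, each to its exact power $c_\ell$, so their product is precisely $D$; hence the Stewart condition at $q$ would require $q\le\sigma(D)+1$, which fails. By the necessity direction of Stewart's characterization, $P(n)$ is not practical. The step demanding most care is the simultaneous valuation control: one must check that pinning $v_\ell(P(n))=c_\ell$ for all $\ell\le B$ really leaves a cofactor $R(n)$ all of whose prime factors exceed $\sigma(D)+1$, and that the bound $B$ is defined from $D$ without circularity (forced primes are bounded by $\sigma(D)$). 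Once this is arranged, the breaking of Stewart's inequality is automatic, and the Proposition follows.
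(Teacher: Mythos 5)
Your proposal is correct and follows essentially the same argument as the paper: both identify the fixed divisor $d=\gcd_n P(n)$ (your $D$), use the Chinese Remainder Theorem to pin the valuation of $P(n)$ at each prime of $d$ to its exact minimum while excluding all other primes up to $\sigma(d)+1$, and then conclude via the necessity direction of Stewart's characterization that the smallest prime of the cofactor breaks the chain condition. Your preliminary reduction to the case $2\mid P(n)$ is a minor cosmetic difference; the paper absorbs the odd case implicitly, since an odd value greater than $1$ already fails Stewart's requirement that the smallest prime factor be $2$.
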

\begin{proof}
Let $d$ be the greatest common divisor of $\{P(1)$, $P(2),\cdots\}$. Let prime factorization of $d$ be $p_1^{a_1}\cdots p_k^{a_k}$ with $p_1<p_2<\cdots<p_k$. Let $V=\{p_1,\cdots,p_k\}$ and let $T$ be set of all primes less than or equal to $\sigma(d)+1$ which are not in the set $V$. Since highest power of $p_i$ dividing $d$ is $p_i^{a_i}$, there exists a congruence $x\mod p_i^{a_{i}+1}$ such that $P(x)$ is not a multiple of $p_i^{a_i+1}$. Since $d$ is not divisible by any primes in $T$, for all primes $t\in T$, there exists a congruence $x_t\mod t$ such that $P(x_t)$ is not a multiple of $t$. From Chinese remainder theorem, there exists a congruence $n \mod d \prod_{t\in T} t$ such that $P(n)$ is not divisible by any prime in $T$ and highest power of $p_i$ dividing $P(n)$ is $a_i$. 

Let $m$ be a sufficiently large natural number such that $m\equiv n \mod d \prod_{t\in T} t$  and $P(m)=dq$ where $q>1$ is not divisible by any prime $\leq \sigma(d)+1$. From \cite[Theorem 1]{Stewart}, $P(m)$ is not a practical number.
\end{proof}

\subsection{Practical numbers of the form $an+b$}
Let $a$ and $b$ be two positive integers. In the following theorem we give necessary and condition for $an+b$ to contain infinitely many practical numbers.
\begin{ArithmeticProgressions} Let $a$ and $b$ be two positive integers and
let $d$ be equal to largest practical number divisor of greatest common divisor of $a,b$. We have:
\begin{itemize}
    \item[(a)]If there exists a prime number $p\leq \sigma(d)+1$ such that $p\nmid \frac{a}{d}$ then there exist infinitely many practical numbers of the form $an+b$. 
    \item[(b)]If $b$ is a practical number and all the primes $p\leq \sigma(d)+1$ divide $\frac{a}{d}$ then there is exactly one practical number of the form $an+b$.
    \item[(c)]If $b$ is not a practical number and all the primes $p\leq \sigma(d)+1$ divide $\frac{a}{d}$ then there are no practical numbers of the form $an+b$.
\end{itemize}
\end{ArithmeticProgressions}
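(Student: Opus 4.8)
The plan is to treat the three cases with two different engines, both drawn from Stewart's characterization. Throughout I write $a=da'$ and $b=db'$ (legitimate since $d\mid\gcd(a,b)$ divides both $a$ and $b$), and I use the quoted consequence of Stewart's theorem: if $m$ is practical and $1\le k\le\sigma(m)+1$ then $mk$ is practical. First I would record the elementary fact that $dp^{j}$ is practical for every $j\ge 0$ whenever $p\le\sigma(d)+1$ is prime: $dp$ is practical by the multiplication property, and since $\sigma(dp^{i})\ge\sigma(d)\ge p-1$ one may append a further factor $p$ at each stage.

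For part (a), pick the prime $p\le\sigma(d)+1$ with $p\nmid a'$. Since $\gcd(a',p)=1$, for every $j$ the congruence $a'n+b'\equiv 0\pmod{p^{j}}$ has a solution; let $n_j\in\{1,\dots,p^{j}\}$ be the least positive one. Then $a'n_j+b'$ is a positive multiple of $p^{j}$, say $a'n_j+b'=p^{j}k_j$ with $1\le k_j\le a'+b'$ (because $a'n_j+b'\le a'p^{j}+b'$). Hence $an_j+b=d(a'n_j+b')=(dp^{j})k_j$, and as $\sigma(dp^{j})\ge dp^{j}\to\infty$ we have $k_j\le\sigma(dp^{j})+1$ for all large $j$; the multiplication property then makes $an_j+b$ practical. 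Since $an_j+b\ge dp^{j}\to\infty$, these are infinitely many distinct practical values. Forcing $a'n+b'$ to absorb a large power of the admissible prime $p$ is the crux of (a).

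For (b) and (c) the hypothesis is that every prime $p\le\sigma(d)+1$ divides $a'$. The main structural step is to analyse the coprime part. Write $g=\gcd(a,b)=de$ and factor $a=de\,a''$, $b=de\,b''$ with $\gcd(a'',b'')=1$, so that $N:=an+b=d\,e\,(a''n+b'')$. I would first show, using maximality of $d$, that every prime factor of $e$ exceeds $\sigma(d)+1$: if a prime $q\le\sigma(d)+1$ divided $e$, then $dq$ would be a practical divisor of $g$ strictly larger than $d$, a contradiction. Consequently the primes $\le\sigma(d)+1$ forced to divide $a'=e\,a''$ must all divide $a''$; as $\gcd(a'',b'')=1$ none of them divides $b''$, and hence none divides $m:=a''n+b''\equiv b''\pmod{p}$. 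Thus for \emph{every} $n$ the cofactor $L:=em=N/d$ has all prime factors strictly greater than $\sigma(d)+1$.

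The argument then closes via a single lemma: if $N=dL$ with $d$ practical and every prime factor of $L$ exceeding $\sigma(d)+1$, then $N$ is practical iff $L=1$. The ``only if'' is a Stewart prefix argument: the primes of $L$ are strictly larger than every prime factor of $d$, so in the factorization of $N$ the product of all prime powers below the least prime $q\mid L$ is exactly $d$, whence Stewart's inequality would force $q\le\sigma(d)+1$, contradicting $q>\sigma(d)+1$ unless $L=1$. Applying this, $N$ is practical iff $em=1$, i.e.\ iff $e=1$ and $a''n+b''=1$, i.e.\ iff $e=1$, $b''=1$ and $n=0$ — equivalently iff $b=d$, which (by the same lemma applied to $b=d\,e\,b''$) is precisely the condition that $b$ itself be practical. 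Hence if $b$ is practical the unique practical value is $N=b$ at $n=0$ (case (b)), and if $b$ is not practical there is none (case (c)). The main obstacle is the structural reduction of the third paragraph: extracting from the coefficient hypothesis that the cofactor $L$ of $d$ is built entirely from primes too large to satisfy Stewart's inequality.
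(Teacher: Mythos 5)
Your proposal is correct and takes essentially the same route as the paper's proof: in part (a) you force $a'n+b'$ to absorb a large power $p^{j}$ of the admissible prime via the congruence $a'n+b'\equiv 0 \pmod{p^{j}}$, writing $an+b=(dp^{j})k_j$ with a bounded cofactor and invoking the multiplication property, and in parts (b), (c) you use maximality of $d$ to show the cofactor $N/d$ has all prime factors exceeding $\sigma(d)+1$ and then apply the ``only if'' direction of Stewart's criterion. The differences are purely organizational (you argue (a) directly instead of by contradiction, and you package the Stewart prefix argument as a standalone lemma with the auxiliary factor $e=\gcd(a,b)/d$, where the paper works with $a_1=a/d$, $b_1=b/d$ inline), not mathematical.
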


\begin{proof}
    Let $a_1=\frac{a}{d}$ and $b_1=\frac{b}{d}$. Let us first prove (a). If there exists a prime $p\leq \sigma(d)+1$ such that $p\nmid a_1$ then we claim that there are infinitely many practical numbers of the form $an+b$. Suppose for a contradiction there are only finitely many practical numbers of the form $an+b$, then there exists an $A$ such that for all $an+b\geq A$, $an+b$ is not a practical number.

   Choose a natural number $k$ such that $p^k\geq A$, $p^k\geq b_1$ and $\sigma(dp^k)+1\geq a_1+1$.
   As $p\nmid a_1$, there exists a solution to $a_1x\equiv -b_1\mod p^k$ and therefore there exists a natural number $1\leq n\leq p^k$ such that $p^k|(a_1n+b_1)$. As $d$ is a practical number and $p\leq \sigma(d)+1$, from \cite[Theorem 1]{Stewart} we have $dp^k$ is a practical number. Now, $an+b=(dp^k)\frac{(a_1n+b_1)}{p^k}$ is a practical number as $dp^k$ is a practical number and \[\frac{a_1n+b_1}{p^k}\leq \frac{a_1p^k+b_1}{p^k} \leq a_1+1\leq \sigma(dp^k)+1.\] As $p^k|an+b$, we have $an+b\geq p^k\geq A$. This contradicts the assumption that there are no practical numbers of the form $an+b$ which are at least $A$. Hence there are infinitely many practical numbers of the form $an+b$.
   This proves (a).
   
   Now let us prove (b) and (c). If for all primes $p\leq \sigma(d)+1$ we have $p|a_1$ then observe that for all primes $p\leq \sigma(d)+1$, we have $p\nmid b_1$ (otherwise $dp$ is a practical number larger than $d$ and is a divisor of $\text{gcd}(a,b)$). Hence $a_1n+b_1$ does not contain any prime factors less than or equal to $\sigma(d)+1$.
   We will now prove that for $n\geq 1$,  $an+b$ is not a practical number. Let $P$ be the smallest prime factor of $a_1n+b_1$. Let the prime factorizations of $d$ and $a_1n+b_1$ be $p_1^{a_1}\cdots p_k^{a_k}$ and $P^{b_1}P_2^{b_2}\cdots P_r^{b_r}$, respectively, where $p_1<p_2<\cdots<P<P_2<\cdots<P_r$. Now, as $P>\sigma(d)+1=\sigma(p_1^{a_1}\cdots p_k^{a_k})+1$, from \cite[Theorem 1]{Stewart} we can conclude that $an+b$ is not a practical number. Therefore, if all primes $p\leq \sigma(d)+1$ are divisors of $\frac{a}{d}$ then $an+b$ is not a practical number for $n\geq 1$. 
   
   Therefore, if $b$ is a practical number and if all primes $p\leq \sigma(d)+1$ are divisors of $a_1$ then there is only one practical number of the form $an+b$, namely $b$. This proves (b). If $b$ is not a practical number and if all primes $p\leq \sigma(d)+1$ are divisors of $a_1$ then there are no practical numbers of the form $an+b$. This proves (c).
\end{proof} 
We have a corollary of Theorem 2.2.
\begin{corollary}
\label{label-5}
Let $a$ be an odd positive integer and $b$ be a positive integer. There exist infinitely many practical numbers of the form of $an+b$.
\end{corollary}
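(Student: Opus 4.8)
The plan is to deduce this directly from part (a) of Theorem 2.2, applied with the prime $p=2$. First I would identify the quantity $d$ appearing in that theorem, namely the largest practical number dividing $\gcd(a,b)$. Since $a$ is odd, $\gcd(a,b)$ is odd, and hence every divisor of $\gcd(a,b)$ is odd. Recalling from the introduction that every practical number other than $1$ is even, the only odd practical number is $1$; therefore $d=1$.

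With $d=1$ in hand, I would compute $\sigma(d)+1=\sigma(1)+1=2$, so the only prime $p$ satisfying $p\leq \sigma(d)+1$ is $p=2$. Because $d=1$ we have $\tfrac{a}{d}=a$, and since $a$ is odd we get $2\nmid \tfrac{a}{d}$. Thus there exists a prime $p\leq \sigma(d)+1$ (namely $p=2$) with $p\nmid \tfrac{a}{d}$, which is exactly the hypothesis of Theorem 2.2(a).

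Invoking Theorem 2.2(a) then gives infinitely many practical numbers of the form $an+b$, completing the argument. I do not anticipate any genuine obstacle: the proof is essentially an instantiation of the general theorem, and the only step needing a word of justification is the identification $d=1$, which rests on the elementary observation that $1$ is the unique odd practical number.
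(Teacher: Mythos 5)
Your proposal is correct and follows essentially the same route as the paper: both apply Theorem 2.2(a) with the prime $p=2$, using that $a$ odd forces $2\nmid \frac{a}{d}$. The only cosmetic difference is that you pin down $d=1$ explicitly (via the fact that $1$ is the unique odd practical number), whereas the paper leaves $d$ unspecified and simply observes that $2\leq \sigma(d)+1$ holds automatically; both justifications are valid.
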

\begin{proof}
Let $d$ be the largest practical number divisor of $\text{gcd}(a,b)$. Since there exists a prime $p=2\leq \sigma(d)+1$ which does not divide $\frac{a}{d}$, from Theorem 2.2 we can conclude that there are infinitely many practical numbers of the form $an+b$.
\end{proof}

\subsection{Practical numbers of the form $an^2+bn+c$} 

Let $q(n)=an^2+bn+c$ be any quadratic polynomial having integer coefficients and a positive leading coefficient. For every prime $p$, let $m_q(p):=\sup\{k\in \mathbb{N}\cup \{0\}:q(n)\equiv 0 \mod p^k \text{ has a solution}\}$ ($m_q(p)=\infty$ is possible). From Hensel's lemma, it follows that there exists a prime number $p$ such that $m_q(p)=\infty$. Let us now work on giving a necessary and sufficient condition for a quadratic polynomial $q(n)$ to represent infinitely many practical numbers. We require a lemma in order to prove Theorem 2.5.  

\begin{lemma}
Let $q(n)=an^2+bn+c$ be any quadratic polynomial with integer coefficients. Let $Q=\{p: \text{ $p$ prime and there exists $n\in \mathbb{N}$ such that $p|q(n)$}\}$ then $\prod_{p\in Q}\big(1+\frac{1}{p}\big)$ diverges.
\end{lemma}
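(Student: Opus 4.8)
The plan is to reduce the divergence of the product to the divergence of the series $\sum_{p\in Q}\frac1p$, and then to exhibit inside $Q$ a set of primes of positive density. For the reduction I would take logarithms and use the elementary bound $\log\!\left(1+\frac1p\right)\ge \frac1p-\frac1{2p^2}\ge \frac1{2p}$, valid for every prime $p\ge 2$. Since $\sum_p \frac1{p^2}$ converges, the product $\prod_{p\in Q}\left(1+\frac1p\right)$ diverges if and only if $\sum_{p\in Q}\frac1p$ diverges, so it suffices to understand which primes lie in $Q$.

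The key step is to describe membership in $Q$ through quadratic residues. Writing $D=b^2-4ac$ and noting $a\neq 0$, for any prime $p\nmid 2a$ the identity $4a\,q(n)=(2an+b)^2-D$ shows that $q(n)\equiv 0\modd p$ is equivalent to $(2an+b)^2\equiv D\modd p$. As $2a$ is invertible modulo $p$, the quantity $2an+b$ runs over all residues as $n$ does, so the congruence is solvable exactly when $D$ is a square modulo $p$, i.e. when $\left(\frac{D}{p}\right)\neq -1$. Hence, up to the finitely many primes dividing $2a$, we have $p\in Q\iff \left(\frac{D}{p}\right)\in\{0,1\}$.

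I would then split on $D$. If $D$ is a perfect square (in particular if $D=0$), then $\left(\frac{D}{p}\right)\neq -1$ for all $p\nmid 2aD$, so $Q$ omits only finitely many primes and $\sum_{p\in Q}\frac1p$ diverges because $\sum_p\frac1p$ does. If $D$ is not a perfect square, I would invoke quadratic reciprocity and its supplementary laws to see that $\left(\frac{D}{p}\right)$ depends only on $p\modd{4|D|}$. Thus $\{p:\left(\frac{D}{p}\right)=1\}$ is a union of residue classes modulo $4|D|$, each coprime to $4|D|$, and this union is nonempty (the residues on which the associated real character equals $1$ form an index-two subgroup of the units). By Dirichlet's theorem, $\sum\frac1p$ over any fixed coprime residue class modulo $4|D|$ diverges, so $\sum_{p\in Q}\frac1p$ diverges in this case as well. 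Combining the two cases proves the lemma.

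The main obstacle is this last arithmetic-progression step: one needs the periodicity of $\left(\frac{D}{p}\right)$ in $p$, furnished by quadratic reciprocity, to recognize $Q$ as a union of fixed residue classes, and then Dirichlet's theorem to guarantee that these classes carry a divergent sum of reciprocals of primes. The logarithmic reduction and the completion of the square are routine. An equivalent alternative is to write $\sum_{p\in Q}\frac1p=\frac12\sum_p\frac{1+\left(\frac{D}{p}\right)}{p}$ up to finitely many corrections and to use that $\sum_p\frac1p$ diverges while $\sum_p\frac{\left(\frac{D}{p}\right)}{p}$ converges, the latter because $\left(\frac{D}{\cdot}\right)$ extends to a non-principal real Dirichlet character with $L(1,\chi)\neq 0$.
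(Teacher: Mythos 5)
Your proof is correct, but it takes a genuinely different route from the paper's. The paper picks an irreducible factor $f$ of $q$, lets $\rho_f(p)$ denote the number of roots of $f$ modulo $p$, and cites the prime ideal theorem (Corollary 3.2.2 of Cojocaru and Murty) for the divergence of $\sum_p \rho_f(p)/p$; since $\rho_f(p)\le 2$, every prime counted there lies in $Q$ and contributes at least $\tfrac{1}{2}\cdot\tfrac{\rho_f(p)}{p}$ to $\sum_{p\in Q}\tfrac1p$, so that sum, and hence the product, diverges. You instead exploit the quadratic structure directly: completing the square identifies $Q$, up to the finitely many primes dividing $2a$, with $\{p:\left(\tfrac{D}{p}\right)\neq -1\}$ where $D=b^2-4ac$; then either $D$ is a perfect square, in which case $Q$ omits only finitely many primes, or quadratic reciprocity makes $p\mapsto\left(\tfrac{D}{p}\right)$ periodic modulo $4|D|$, so that $Q$ contains all primes in a nonempty union of coprime residue classes, and Dirichlet's theorem on primes in arithmetic progressions gives the divergence. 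Both arguments are complete. The trade-offs are worth noting: your route avoids the prime ideal theorem, resting only on the classical reciprocity-plus-Dirichlet package, and it actually yields more precise information (the set $Q$ has Dirichlet density at least $\tfrac12$); on the other hand, it is irreducibly tied to degree two, whereas the paper's argument applies verbatim to irreducible factors of any degree --- which is exactly what one would need for the cubic and biquadratic analogues raised as future work in the paper's final section.
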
 
\begin{proof}
Let $f$ be any irreducible divisor of  $q$ and let $\rho_f(p)$ denote the number of solutions modulo $p$ of the congruence $f(x)\equiv 0 \mod p$. From prime ideal theorem, it follows that $\sum_{p}\frac{\rho_f(p)}{p}$ diverges (see Corollary 3.2.2 of \cite{Murthy}). As $\rho_f(p)\leq 2$, it follows that $\sum_{p\in Q}\frac{1}{p}$ diverges and $\prod_{p\in Q}(1+\frac{1}{p})$ diverges.
\end{proof}
\begin{theorem}
Let $q$ be a quadratic polynomial with integer coefficients and with positive leading coefficient. Let $p_n$ denote $n$th prime number and let $r$ be the least positive integer such that $m_q(p_r)=\infty$. There are infinitely many practical numbers of the form $q(n)$ if and only if $p_1^{m_q(p_1)}p_2^{m_q(p_2)}\cdots p_{r-1}^{m_q(p_{r-1})}p_r$ is a practical number.
\end{theorem}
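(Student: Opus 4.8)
The plan is to prove the two implications separately. The reverse direction (\emph{only if}), that infinitely many practical values force $N:=p_1^{m_q(p_1)}\cdots p_{r-1}^{m_q(p_{r-1})}p_r$ to be practical, I would attack by contraposition using Stewart's criterion, and I expect it to be the routine half. The forward direction (\emph{if}) requires manufacturing infinitely many $n$ with $q(n)$ practical, and the genuine difficulty sits here, in controlling the \emph{largest} prime factor of $q(n)$; this is exactly where Lemma 2.4 should enter.

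For the \emph{only if} direction, assume $N$ is not practical. First, if $m_q(2)=0$ then every $q(n)$ is odd and hence never practical beyond the value $1$, so we are done; thus assume $2\mid N$. Let $P^*$ be the smallest prime factor of $N$ at which Stewart's chain first fails, so that $\sigma\big(\prod_{p_i<P^*}p_i^{m_q(p_i)}\big)+1<P^*$. For an arbitrary $n$, the part $B$ of $q(n)$ supported on primes below $P^*$ is a divisor of $\prod_{p_i<P^*}p_i^{m_q(p_i)}$, because the exponent of each $p_i$ in $q(n)$ is at most $m_q(p_i)$ by the definition of $m_q$; since $\sigma(d)\le\sigma(m)$ whenever $d\mid m$, we get $\sigma(B)+1<P^*$. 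Hence the smallest prime factor of $q(n)/B$ is $\ge P^*>\sigma(B)+1$, so Stewart's criterion fails for $q(n)$ unless $q(n)=B\le\prod_{p_i<P^*}p_i^{m_q(p_i)}$. As $q$ has positive leading coefficient, this bounds $n$, leaving only finitely many practical values. I would present this as the clean half.

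For the \emph{if} direction, suppose $N$ is practical. Using the achievability of each finite $m_q(p_i)$ together with $m_q(p_r)=\infty$, the Chinese remainder theorem lets me pin down a residue class modulo $\prod_{i<r}p_i^{m_q(p_i)+1}\cdot p_r^{K+1}$ on which the exponent of each $p_i$ ($i<r$) equals $m_q(p_i)$ and $p_r^{K}\mid q(n)$ for a prescribed large $K$. Thus $q(n)=M p_r^{k}\,w(n)$ with $M=\prod_{i<r}p_i^{m_q(p_i)}$, $k\ge K$, and $w(n)$ coprime to $p_1\cdots p_r$. Since $N=Mp_r$ is practical we have $p_r\le\sigma(M)+1$, and iterating the multiplication property from the introduction shows each $Mp_r^{k}$ is practical. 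I would then record the sufficient condition: if every prime factor of $w(n)$ is at most $\sigma(Mp_r^{k})+1$, then ordering the primes of $Mp_r^{k}w(n)$ and applying Stewart's criterion one prime at a time shows $q(n)$ is practical, since each new prime of $w(n)$ is bounded by $\sigma(Mp_r^{k})+1\le\sigma(\text{product of smaller prime powers})+1$.

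The crux is therefore to find infinitely many $n$ in this progression for which $q(n)$ has no prime factor exceeding $\sigma(Mp_r^{k})+1$. Taking the least representative gives $n\ll p_r^{K}$ and hence $q(n)\ll p_r^{2K}$, while $\sigma(Mp_r^{k})+1\gg p_r^{K}$; so the requirement is essentially that $q(n)$ carry no prime factor larger than a constant times $\sqrt{q(n)}$. I would bound the exceptional set — those $n$ for which $q(n)$ factors as a smooth part times one over-large prime $P$ — by a sieve, and here Lemma 2.4, i.e. the divergence of $\prod_{p\in Q}\big(1+\tfrac1p\big)$, is meant to guarantee that the main term of the sieve does not collapse, leaving a positive proportion of admissible $n$ and thus infinitely many practical $q(n)$. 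Controlling this largest-prime-factor condition uniformly as $K\to\infty$ is the step I expect to be the main obstacle; the algebraic reductions above are comparatively mechanical.
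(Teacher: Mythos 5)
Your \emph{only if} half is correct and is essentially the paper's argument: since the exponent of each $p_i$ in any value $q(n)$ is at most $m_q(p_i)$, the sub-$P^*$ part $B$ of $q(n)$ divides $\prod_{p_i<P^*}p_i^{m_q(p_i)}$, so Stewart's criterion fails at the smallest prime factor of $q(n)/B$ unless $q(n)=B$ is bounded. No issues there.

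The \emph{if} half, however, has a genuine gap, and it sits exactly where you flagged it. After the CRT step you reduce the problem to showing that, for infinitely many $n$ in a prescribed residue class modulo $p_r^{K+1}\prod_{i<r}p_i^{m_q(p_i)+1}$, every prime factor of the cofactor $w(n)$ is at most $\sigma(Mp_r^k)+1\approx p_r^K$, i.e.\ that $q(n)$ has no prime factor exceeding roughly $\sqrt{q(n)}$. This is a smoothness statement about values of an irreducible quadratic, and no sieve delivers it: sieves give \emph{upper} bounds for the number of $n$ with $q(n)$ having a large prime factor, not the lower bound you need for the complementary set, and moreover in your setup the range of $n$ is comparable to the modulus, so each $K$ supplies only boundedly many candidates --- there is nothing to sieve. (For comparison, even for $n^2+1$ the best known results produce infinitely many values with largest prime factor around $q(n)^{0.6}$, well short of the exponent $1/2$ you require, and without the progression and uniformity-in-$K$ constraints.) The idea you are missing is that Stewart's multiplication property applies to an \emph{arbitrary} cofactor, not just to primes: if $D$ is practical and $q(n)/D\leq \sigma(D)+1$, then $q(n)$ is practical regardless of how the cofactor factors. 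The paper exploits this by enlarging the modulus: Lemma 2.4 (whose role you misidentify as a sieve input) guarantees primes $t_1,\ldots,t_s\in Q$ with $t_j>p_r$ and $\prod_{j=1}^{s}\bigl(1+\tfrac{1}{t_j}\bigr)>a+b+c$, and these are adjoined to $D=p_1^{m_q(p_1)}\cdots p_{r-1}^{m_q(p_{r-1})}p_r^k t_1\cdots t_s$, which is still practical. Since the CRT solution satisfies $n\leq D$, one gets
\[
\sigma(D)\geq D\prod_{j=1}^{s}\Bigl(1+\frac{1}{t_j}\Bigr)\geq n\prod_{j=1}^{s}\Bigl(1+\frac{1}{t_j}\Bigr)>n(a+b+c)\geq \frac{q(n)}{n}\geq \frac{q(n)}{D},
\]
so the cofactor is automatically small enough and no control whatsoever on its prime factors is needed. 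Replacing your smoothness step by this divisor-enlargement argument closes the gap.
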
 

\begin{proof}
 Suppose $p_1^{m_q(p_1)}p_2^{m_q(p_2)}\cdots p_{r-1}^{m_q(p_{r-1})}p_r$ is not a practical number. Then either $m_q(p_1)=m_q(2)=0$ or there exists an $2\leq i \leq r$ such that $m_q(p_i)>0$ and $p_i>\sigma\left(p_1^{m_q(p_1)}p_2^{m_q(p_2)}\cdots p_{i-1}^{m_q(p_{i-1})}\right)+1$. If $m_q(2)=0$ then all $q(n)$ are odd and all $q(n)>1$ are not practical numbers. Hence there are only finitely many practical numbers of the form $q(n)$. 
 
 If there exists an $2\leq i \leq r$ such that $m_q(p_i)>0$ and $p_i>\sigma(p_1^{m_q(p_1)}\cdots p_{i-1}^{m_q(p_{i-1})})+1$, then we claim that for all natural numbers $n$ such that $q(n)>p_1^{m_q(p_1)}\cdots p_{r-1}^{m_q(p_{r-1})}p_r$, $q(n)$ is not a practical number. Observe that $q(n)$ should have at least one prime factor $p\geq p_i$ (otherwise from the definition of $m_q(p_i)$, we must have $q(n)\leq p_1^{m_q(p_1)}\cdots p_{i-1}^{m_q(p_{i-1})}$). Let $p$ be the smallest prime factor of $q(n)$ satisfying $p\geq p_{i}$. Now, $q(n)=p_1^{a_1}p_2^{a_2}\cdots p_{i-1}^{a_{i-1}}p^kQ$ where $Q$ is either $1$ or the least prime factor of $Q$ is greater than $p_i$. Now as $p\geq p_i > \sigma(p_1^{a_1}\cdots p_{i-1}^{a_{i-1}})+1\geq \sigma(p_1^{m_q(p_1)}p_2^{m_q(p_2)}\cdots p_{i-1}^{m_q(p_{i-1})})+1$, from \cite[Theorem 1]{Stewart} we can conclude that $q(n)$ is not a practical number. Hence there are only finitely many $q(n)$ such that $q(n)$ is a practical number. This proves one part of the theorem.

If $p_1^{m_q(p_1)}\cdots p_{r-1}^{m_q(p_{r-1})}p_r$ is a practical number. We claim that there are infinitely many practical numbers of the form $q(n)$. For the sake of contradiction, assume that there are only finitely many practical numbers of the form $q(n)$. Then there exists a number $A$ such that there are no practical numbers $q(n)$ such that $q(n)\geq A$.
Let $Q=\{p: \text{ $p$ prime and there exists $n\in \mathbb{N}$ such that $p|q(n)$}\}.$ From previous lemma, it follows that $\prod_{p\in Q}(1+\frac{1}{p})$ diverges. Hence there exist primes $t_1,\ldots, t_s\in Q$ that are greater than $p_r$ such that $\prod_{i=1}^{s}(1+\frac{1}{t_i})>a+b+c$. Let $k$ be a natural number such that $p_r^k>A$ and $p_1^{m_q(p_1)}\cdots p_{r-1}^{m_q(p_{r-1})}p_r^k\geq t_1\cdots t_s$. Since $m_q(p_r)=\infty$ there exists a solution $x_r \mod p_r^k$ such that $q(x_r)\equiv 0 \mod p_r^k$. For $1\leq i\leq r-1$, there exists a solution $x_i\mod p_i^{m_q(p_i)}$ such that $f(x_i)\equiv 0\mod p_i^{m_q(p_i)}$ and for $1\leq j \leq s$ there exists $y_j\mod t_j$ such that $q(y_j)\equiv 0 \mod t_j$. Hence there exists an $1\leq n\leq D$ such that $q(n)$ is divisible by $D$ where $D=p_1^{m_q(p_1)}\cdots p_{r-1}^{m_q(p_{r-1})}p_r^kt_1t_2\cdots t_s$. Observe that as $p_1^{m_q(p_1)}\cdots p_{r-1}^{m_q(p_{r-1})}p_r^k$ is a practical number and since $t_1t_2\cdots t_r\leq p_1^{m_q(p_1)}p_2^{m_q(p_2)}\cdots p_{r-1}^{m_q(p_{r-1})}p_r^k$ we have $D=p_1^{m_q(p_1)}p_2^{m_q(p_2)}\cdots p_{r-1}^{m_q(p_{r-1})}p_r^kt_1t_2\cdots t_s$ is a practical number.

As $q(n)=D(\frac{q(n)}{D})$ and \[\sigma(D)=D\left(\frac{\sigma(D)}{D}\right)\geq n\prod_{j=1}^s\left(1+\frac{1}{t_j}\right)>n(a+b+c)\geq \frac{an^2+bn+c}{n}\geq \frac{q(n)}{n}\geq \frac{q(n)}{D},\] we have $q(n)$ is a practical number greater than $A$, contradicting our assumption. Hence there are infinitely many practical numbers of the form $q(n)$. This proves the other part of the theorem.
\end{proof}
Now before proving a corollary of Theorem 2.5, we prove a lemma.
\begin{lemma}
Let $a,b,c$ be positive integers with $2\nmid ab$ and $2|c$. Then for all positive integers $k$ there exists a natural number $n$ such that $an^2+bn+c\equiv 0 \mod 2^k$.
\end{lemma}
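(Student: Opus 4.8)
The plan is to apply Hensel's lemma at the prime $p=2$, exploiting the fact that the hypothesis $2\nmid ab$ forces both $a$ and $b$ to be odd, which makes the formal derivative of $q(n)=an^2+bn+c$ a unit modulo $2$ at every point. First I would secure a base solution modulo $2$: for any integer $n$ we have $q(n)\equiv n^2+n+0\equiv n(n+1)\equiv 0\pmod 2$, using $a\equiv b\equiv 1$ and $c\equiv 0\pmod 2$ together with the fact that a product of consecutive integers is even. Thus every residue solves the congruence modulo $2$, and in particular $n_1=1$ works.

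The key observation is that $q'(n)=2an+b\equiv b\equiv 1\pmod 2$ for every $n$, so the derivative never vanishes mod $2$; this is exactly the nondegeneracy hypothesis that drives Hensel lifting, and it is precisely what $2\nmid b$ provides. By Hensel's lemma (already invoked earlier in the paper), the base solution lifts to a solution modulo $2^k$ for every $k$, which gives the claim. To keep the argument self-contained I would instead prove this lifting directly by induction on $k$. Assuming $q(n_k)\equiv 0\pmod{2^k}$, write $q(n_k)=2^k s$ and set $n_{k+1}=n_k+t\,2^k$. Expanding gives
\[
q(n_{k+1})=q(n_k)+t\,2^k(2an_k+b)+a\,t^2\,2^{2k},
\]
and since $2k\ge k+1$ the final term vanishes modulo $2^{k+1}$, leaving $q(n_{k+1})\equiv 2^k\bigl(s+t(2an_k+b)\bigr)\pmod{2^{k+1}}$. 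Because $2an_k+b$ is odd, choosing $t\equiv s\pmod 2$ makes the bracketed factor even, so $q(n_{k+1})\equiv 0\pmod{2^{k+1}}$, completing the induction.

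There is no genuine obstacle here; the only point requiring a moment of care is the bookkeeping in the expansion, namely that the linear-in-$t$ contribution collects into the odd factor $2an_k+b$ while the $a\,t^2\,2^{2k}$ term is negligible modulo $2^{k+1}$ for $k\ge 1$. The hypothesis $2\mid c$ is used solely to guarantee the base case modulo $2$, whereas $2\nmid b$ is what ensures the solubility of the one-variable linear congruence at each lifting step and hence the existence of the lift for all $k$.
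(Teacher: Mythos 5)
Your proof is correct and follows essentially the same route as the paper: an induction on $k$ in which a solution modulo $2^k$ is shifted by $t\cdot 2^k$, the quadratic term is discarded modulo $2^{k+1}$, and the oddness of $b$ (equivalently, of $2an_k+b$) is used to solve the resulting linear condition on $t$. The paper's argument is exactly this Hensel-type lift written out by hand, so the two proofs differ only in notation.
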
 

\begin{proof}
We prove using induction on $k$. For $k=1$, as $a.1^2+b.1+c\equiv 0 \mod 2$, the statement is true for $k=1$. Suppose the statement is true for $k=l$, then there exists an $n$ such that $an^2+bn+c\equiv 0 \mod 2^l$. Now consider $m=2^lx+n$ then as $a(2^lx+n)^2+b(2^lx+n)+c\equiv an^2 + 2^lbx+bn+c\mod 2^{l+1}$, if $x\equiv \frac{an^2+bn+c}{2^l}\mod 2$ then $a(2^lx+n)^2+b(2^lx+n)+c\equiv 0 \mod 2^{l+1}$. Hence the statement is true for $l+1$ and the lemma follows from the principle of mathematical induction.
\end{proof}

\begin{corollary}
Let $a,b,c$ be positive integers with $2\nmid ab$ and $2|c$. Then there are infinitely many practical numbers of the form $an^2+bn+c$.
\end{corollary}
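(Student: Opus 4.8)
The plan is to deduce this directly from Theorem 2.5, with Lemma 2.6 supplying the one nontrivial input. Write $q(n)=an^2+bn+c$ and recall that in Theorem 2.5 the primes are indexed as $p_1=2,p_2=3,\ldots$ and $r$ denotes the least index with $m_q(p_r)=\infty$. The hypotheses $2\nmid ab$ and $2\mid c$ are exactly those of Lemma 2.6, so I would first quote Lemma 2.6 to conclude that $q(n)\equiv 0\pmod{2^k}$ has a solution for every $k$, which is precisely the assertion that $m_q(2)=m_q(p_1)=\infty$.

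From this the value of $r$ is forced: since $m_q(p_1)=\infty$ already, the least index with infinite $m_q$ is $r=1$. This is the decisive simplification, because the product $p_1^{m_q(p_1)}\cdots p_{r-1}^{m_q(p_{r-1})}p_r$ occurring in Theorem 2.5 degenerates when $r=1$ to the empty product over $i=1,\ldots,0$ multiplied by $p_r$, that is, to $p_1=2$.

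Finally I would apply the criterion of Theorem 2.5: infinitely many practical numbers of the form $q(n)$ exist if and only if $p_1^{m_q(p_1)}\cdots p_{r-1}^{m_q(p_{r-1})}p_r$ is practical, and by the previous step this number is just $2$. Since $2$ is a practical number (its divisors $1$ and $2$ represent every integer in $[1,2]$), Theorem 2.5 immediately yields infinitely many practical numbers of the form $an^2+bn+c$.

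I do not expect any genuine obstacle: the entire content has been front-loaded into Lemma 2.6, whose purpose is exactly to guarantee $m_q(2)=\infty$ and hence $r=1$, collapsing the practicality condition of Theorem 2.5 to the trivially true statement that $2$ is practical. The only point requiring a little care is the bookkeeping of the empty product in the case $r=1$, to make sure the quantity in Theorem 2.5 really is $p_r=2$ and not something larger.
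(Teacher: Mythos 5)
Your proposal is correct and is essentially identical to the paper's own proof: both invoke Lemma 2.6 to get $m_q(2)=\infty$, so that $r=1$ and the quantity in Theorem 2.5 reduces to $2$, which is practical. The paper states this more tersely, while you spell out the empty-product bookkeeping, but the argument is the same.
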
 

\begin{proof}
Let $q(n)=an^2+bn+c$ then previous lemma implies that $m_q(2)=\infty$. As $2$ is a practical number, from Theorem 2.5 we can conclude that there are infinitely many practical numbers of the form $an^2+bn+c$.

The above corollary solves the first part of \cite[Conjecture 1.1]{Wu}.
\end{proof}

\subsection{Palindromic practical numbers} 
It is not known whether there are infinitely many palindromic primes but for practical numbers it is easy to show that there are infinitely many palindromic practical numbers. 
\begin{proposition}
There are infinitely many palindromic practical numbers.
\end{proposition}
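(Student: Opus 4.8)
The plan is to produce an explicit strictly increasing family of practical palindromes, namely the repdigits $m_j=\underbrace{6\cdots6}_{2^j}$ consisting of $2^j$ sixes, and to prove their practicality by an induction that mirrors the doubling of their length. Each $m_j$ is a palindrome for trivial reasons, and $m_{j+1}$ is obtained from $m_j$ by writing it twice in a row, so that if $m_j$ has $k_j=2^j$ digits then $m_{j+1}=m_j\,(10^{k_j}+1)$ and $k_{j+1}=2k_j$. Since $m_0<m_1<\cdots$, it suffices to show every $m_j$ is practical.

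The engine of the proof is the multiplication property recorded in the introduction: if $m$ is practical then $mn$ is practical for every $n\le\sigma(m)+1$. Applying this with $m=m_j$ and $n=10^{k_j}+1$ shows $m_{j+1}$ is practical, provided the invariant $\sigma(m_j)\ge 10^{k_j}$ holds, since then $10^{k_j}+1\le\sigma(m_j)+1$. The base case is immediate, as $\sigma(6)=12\ge 10$. I expect the heart of the argument, indeed its only nonformal step, to be checking that this invariant propagates; the choice of seed genuinely matters here, and a digit such as $2$ or $4$ would fail because $\sigma(2),\sigma(4)<10$ (and correspondingly $22$ and $44$ are not practical).

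To propagate the invariant I would use that $\sigma(n)/n=\sum_{d\mid n}1/d$ is nondecreasing under divisibility, so that $m_j\mid m_{j+1}$ gives $\sigma(m_{j+1})/m_{j+1}\ge\sigma(m_j)/m_j$. Therefore
\[\sigma(m_{j+1})\ge (10^{k_j}+1)\,\sigma(m_j)\ge (10^{k_j}+1)10^{k_j}>10^{2k_j}=10^{k_{j+1}},\]
which is exactly the invariant at stage $j+1$. This closes the induction, so each $m_j$ is simultaneously a palindrome and a practical number, and there are infinitely many such numbers.
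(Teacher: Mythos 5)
Your proof is correct and follows essentially the same route as the paper: the paper takes the repdigits $A_n=\frac{8(10^{2^n}-1)}{9}$ (strings of $2^n$ eights), uses the same decomposition $A_{n+1}=A_n(10^{2^n}+1)$, and closes the induction via Stewart's multiplication property. The only cosmetic differences are your choice of digit ($6$ versus $8$) and how the bound on $\sigma$ is obtained (your divisibility-monotonicity of $\sigma(n)/n$ versus the paper's bound $\sigma(A_n)\geq 2A_n-1$, which holds for any practical number).
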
 

\begin{proof}
Let $A_n= \frac{8(10^{2^n}-1)}{9}$. Observe that $A_n$ is palindrome for all natural numbers $n$. We claim that $A_n$ is a practical number for all positive integers $n$. We prove this by using induction on $n$. When $n=1$, $A_1=88$ which is a practical number. Suppose $A_n$ is a practical number, then as $A_{n+1} = A_n(10^{2^n}+1)$ and $\sigma(A_n)\geq 2A_n-1>10^{2^n}+1$ we can conclude that $A_{n+1}$ is also a practical number. Hence $A_n$ is practical for all positive integers $n$. Hence there are infinitely many palindromic practical numbers. 
\end{proof}
	
\section{On Natural Numbers which can be represented as sum of a square and a practical number}
We now prove that every natural number of $8k+1$ can expressed as a sum of practical number and a square. We will prove this using some lemmas.
\begin{lemma}
If $n=2^km$ with $k\geq 1$ and $m\leq 2^{k+1}$, then $n$ is a practical number.
\end{lemma}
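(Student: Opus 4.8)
The plan is to reduce the claim to the basic multiplicative property of practical numbers recalled in the introduction, namely that if $P$ is a practical number and $t \le \sigma(P)+1$, then $Pt$ is practical. The natural choice is to take $P = 2^k$ and $t = m$, so that $Pt = 2^k m = n$, and then to check that the hypothesis $m \le 2^{k+1}$ is exactly what the multiplicative property requires.

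First I would observe that $2^k$ is itself a practical number for every $k \ge 1$: in Stewart's criterion the only prime dividing $2^k$ is $p_1 = 2$, so the condition on the larger primes is vacuous and $2^k$ qualifies. Next I would compute $\sigma(2^k) = 1 + 2 + \cdots + 2^k = 2^{k+1} - 1$, whence $\sigma(2^k) + 1 = 2^{k+1}$. The hypothesis $m \le 2^{k+1}$ is therefore precisely the inequality $m \le \sigma(2^k) + 1$.

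Applying the multiplicative property with $P = 2^k$ and $t = m$ then yields immediately that $n = 2^k m$ is a practical number, with no assumption needed on the internal factorization of $m$ (in particular $m$ may itself be even). This completes the argument.

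In truth there is no substantial obstacle here; the entire content of the lemma is that the stated bound $m \le 2^{k+1}$ matches $\sigma(2^k)+1$ exactly, which is what makes the multiplicative property directly applicable. The only points that genuinely need verifying are the elementary facts that powers of two are practical and that $\sigma(2^k) = 2^{k+1}-1$, both of which are immediate, so I expect the proof to be essentially a one-line application once these observations are in place.
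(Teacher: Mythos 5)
Your proposal is correct and follows exactly the paper's own argument: $2^k$ is practical, $\sigma(2^k)+1 = 2^{k+1}$, and the multiplicative property from Stewart's criterion applied with $m \le \sigma(2^k)+1$ gives the result. Nothing to add.
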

\begin{proof}
As $2^k$ is a practical number and $m\leq 2^{k+1}= \sigma(2^k)+1$ we can conclude that $2^km$ is a practical number.
\end{proof}
\begin{lemma}
Let $m$ be a natural number such that $m\equiv 1\mod 8$ and let $k$ be a positive integer. There exists a natural number $1\leq x \leq 2^{k}-1$ satisfying $x^2\equiv m\modd{2^{k+2}}$.
\end{lemma}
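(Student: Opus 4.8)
The plan is to prove the statement by induction on $k$, lifting a root modulo $2^{k+2}$ to a root modulo $2^{k+3}$ while controlling its size so that it stays inside the prescribed interval. For the base case $k=1$ the required range is the single value $x=1$, and since $m\equiv 1\pmod 8$ we have $1^2=1\equiv m\pmod{2^{3}}$, so $x=1$ works.

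For the inductive step, I would assume $1\le x\le 2^{l}-1$ satisfies $x^2\equiv m\pmod{2^{l+2}}$, noting first that $x$ must be odd because $m$ is odd. Writing $x^2-m=2^{l+2}s$, I would split according to the parity of $s$. If $s$ is even, keep $x'=x$; then $x^2-m=2^{l+2}s\equiv 0\pmod{2^{l+3}}$ and $x'\in[1,2^{l}-1]\subseteq[1,2^{l+1}-1]$. If $s$ is odd, take instead the reflected root $x'=2^{l+1}-x$, which lies in $[2^{l}+1,\,2^{l+1}-1]$. Expanding $(2^{l+1}-x)^2=2^{2l+2}-2^{l+2}x+x^2$ and reducing modulo $2^{l+3}$ (using $2l+2\ge l+3$ for $l\ge 1$, and the oddness of $x$ to get $2^{l+2}x\equiv 2^{l+2}$) gives $(x')^2\equiv x^2-2^{l+2}=m+2^{l+2}(s-1)\equiv m\pmod{2^{l+3}}$, since $s-1$ is even. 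In either case $x'$ is odd and lies in $[1,2^{l+1}-1]$, which both closes the induction and preserves the oddness invariant needed for the next step.

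The main obstacle is the size restriction $x\le 2^{k}-1$, not the existence of a square root itself. A naive Hensel lift offers the two candidates $x$ and $x+2^{l+1}$, but the second overshoots the target interval $[1,2^{l+1}-1]$. The device that resolves this is to replace $x+2^{l+1}$ by the reflection $2^{l+1}-x$; the key computation is that $2^{l+1}-x$ is again a valid root modulo $2^{l+3}$ precisely when $s$ is odd, which is exactly the case in which the unreflected choice $x'=x$ fails. This parity matching is the only genuinely delicate point, and everything else reduces to routine expansion and bookkeeping on powers of $2$.
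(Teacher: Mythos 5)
Your proof is correct and follows essentially the same route as the paper's: induction on $k$, with the key device of replacing the overshooting Hensel lift by the reflection $2^{l+1}-x$, verified by the same parity computation (the paper phrases the case split as $x^2\equiv m$ versus $x^2\equiv m+2^{l+2} \pmod{2^{l+3}}$, which is exactly your parity of $s$). The only cosmetic difference is that the paper also checks $k=2$ as a base case, which your argument shows is unnecessary since the inductive step already works for $l\geq 1$.
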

\begin{proof}
Let $m$ be any natural number such that $m\equiv 1 \mod 8$. We prove that for all positive integers $k$ there exists a natural number $1\leq x\leq 2^{k}-1$ such that $x^2\equiv m \mod 2^{k+2}$ using mathematical induction. The statement is true for $k=1$ as $1^2\equiv m \mod 2^{3}$. As $m\equiv 1 \mod 8$ implies $m\equiv 1^2\mod 2^4$ or $m\equiv 3^2\mod 2^4$, the statement is true for $k=2$.

Assume that the statement is true for $k=s\geq 2$ then there exists a natural number $1\leq x_s\leq 2^{s}-1$ such that $x_s^2\equiv m \mod 2^{s+2}$. Now, $x_s^2\equiv m \mod 2^{s+3}$ or $x_s^2\equiv m+2^{s+2} \mod 2^{s+3}$.  Let $x_{s+1}=x_s$ if $x_s^2\equiv m \mod 2^{s+3}$ and $x_{s+1}=2^{s+1}-x_s$ if $x_s^2\equiv m+2^{s+2} \mod 2^{s+3}$. Note that $1\leq x_{s+1}\leq 2^{s+1}-1$ and we claim that $x_{s+1}^2\equiv m \mod 2^{s+3}$. If $x_s^2\equiv m \mod 2^{s+3}$ then as $x_{s+1}=x_s$ we have $x_{s+1}^2\equiv m \mod 2^{s+3}$. If  $x_s^2\equiv m +2^{s+2}\mod 2^{s+3}$ then \begin{align*}
x_{s+1}^2 &\equiv (2^{s+1}-x_s)^2\mod 2^{s+3}\\
&\equiv 2^{2s+2}+x_s^2-2^{s+2}x_s\mod 2^{s+3}\\
&\equiv m+2^{s+2}-2^{s+2}x_s\mod 2^{s+3}\\
&\equiv m + 2^{s+2}(1-x_s)\mod 2^{s+3}\\
&\equiv m \mod 2^{s+3} \text{ (as $(1-x_s)$ is even, $2^{s+2}(1-x_s)$ is divisible by $2^{s+3}$)}.
\end{align*}
Hence the statement is true for $k=s+1$ and the lemma follows from the principle of mathematical induction.
\end{proof} 
Now we are ready to prove the main result of this section.
\begin{theorem}
Every natural number of the form $8k+1$ can be expressed as sum of a square and a practical number and for every $j\in \{0,\cdots, 7\}\setminus\{1\}$ there exist infinitely many $k$ such that $8k+j$ cannot be written as sum of a practical number and a square.
\end{theorem}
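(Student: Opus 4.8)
The plan is to prove the two assertions separately, using Stewart's theorem as the engine for non-practicality and Lemmas 3.2 and 3.3 for the positive part. For the representation of $N=8k+1$, I would fix $N\ge 4$, set $\ell=\lfloor\tfrac12\log_2 N\rfloor\ge 1$ so that $2^{2\ell}\le N<2^{2\ell+2}$, and invoke Lemma 3.3 (applicable since $N\equiv 1\pmod 8$) to produce $x$ with $1\le x\le 2^{\ell}-1$ and $x^2\equiv N\pmod{2^{\ell+2}}$. Then $N-x^2$ is a positive multiple of $2^{\ell+2}$ (positivity since $x^2<2^{2\ell}\le N$), say $N-x^2=2^{\ell+2}s$, and the bounds give $1\le s\le N/2^{\ell+2}<2^{\ell}\le 2^{\ell+3}$, so Lemma 3.2 shows $N-x^2$ is practical. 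Hence $N=x^2+(N-x^2)$ is a square plus a practical number, and the finitely many small cases are checked by hand, e.g. $1=0^2+1$.

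For non-representability the key sufficient condition is: if $m=2^{b}u$ with $u$ odd and every prime factor of $u$ exceeds $\sigma(2^{b})+1=2^{b+1}$, then by Stewart's theorem $m$ is not practical (and an odd $m>1$ is never practical). The whole task is to choose, for each residue $j$, an infinite family of $N\equiv j\pmod 8$ for which \emph{every} difference $N-x^2$ falls under this condition. The crucial observation controlling the $2$-adic valuation is that $v_2(x^2)=2v_2(x)$ is always even, so whenever $v_2(N)$ is odd there is no cancellation and $v_2(N-x^2)=\min(2v_2(x),v_2(N))\le v_2(N)$. Thus I would make $v_2(N)$ odd whenever the residue permits: $N\equiv 2,6\pmod 8$ (so $v_2(N)=1$) and $N\equiv 8\pmod{16}$ for $j=0$ (so $v_2(N)=3$); for odd $N$ the congruence $x^2\equiv 1\pmod 8$ already pins $v_2(N-x^2)=v_2(N-1)\in\{1,2\}$ for odd $x$ while even $x$ gives an odd difference; and the one forced-even case $j=4$ ($v_2(N)=2$) is tamed by demanding $N\equiv 12\pmod{32}$, which after the single possible cancellation ($x=2\cdot\mathrm{odd}$) yields $v_2(N-x^2)\le 3$. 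In every case one gets a uniform bound $B=\max_x v_2(N-x^2)\in\{1,2,3\}$.

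With $B$ fixed I would impose that $N$ be a quadratic non-residue modulo each odd prime $p\le 2^{B+1}$, so that $N-x^2\not\equiv 0\pmod p$ for all $x$; then the odd part of every $N-x^2$ is coprime to all these primes and its least prime factor exceeds $2^{B+1}\ge 2^{\,v_2(N-x^2)+1}=\sigma(2^{\,v_2(N-x^2)})+1$. These finitely many non-residue conditions together with the prescribed residue modulo a power of $2$ are compatible by the Chinese remainder theorem and cut out an arithmetic progression of admissible $N$. The only representations left to exclude are those with $N-x^2$ a power of $2$ (the ``odd part $=1$'' case, which includes $N-x^2=1$ since $1$ is practical); these amount to finitely many equations $N-2^{b}=\square$ with $0\le b\le B$, each isolating an $O(\sqrt{X})$-sparse set, so infinitely many $N$ in the progression avoid all of them. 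For such $N$ every $N-x^2$ is either odd and $>1$, or has odd part $>1$ with least prime factor $>\sigma(2^{\,v_2(N-x^2)})+1$, hence is never practical, establishing that $8k+j$ is non-representable for infinitely many $k$.

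The step I expect to be most delicate is the uniform control of $v_2(N-x^2)$ for the even residues, especially $j=0$ and $j=4$: a naive fixed residue modulo $8$ allows $x=2\cdot\mathrm{odd}$ to produce arbitrarily large powers of $2$ in $N-x^2$, giving unbounded room for practicality and defeating any fixed set of non-residue conditions. The remedy is exactly the parity-of-valuation argument (forcing $v_2(N)$ odd) together with the sharper modulus $32$ for $j=4$, which is what keeps $B$ bounded and the list of primes finite.
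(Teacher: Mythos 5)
Your proposal is correct, and its positive half coincides with the paper's argument: same choice of $\ell$ with $2^{2\ell}\le N<2^{2\ell+2}$, same two lemmas, same bound $s<2^{\ell}\le 2^{\ell+3}$. For the negative half you follow the paper's general strategy --- congruences forcing a bounded power of $2$ in $N-x^2$, quadratic non-residue conditions modulo the odd primes up to $\sigma(2^B)+1=2^{B+1}$, then Stewart's criterion --- but you differ in two substantive ways, both to your credit. First, you derive the bound $B$ on $v_2(N-x^2)$ systematically from the parity of $v_2(N)$ (with the sharper modulus for $j=4$), where the paper just exhibits one congruence class per residue $j$. Second, and essentially, you explicitly exclude the differences $N-x^2\in\{1,2,4,8\}$, whose odd part is $1$: these \emph{are} practical, and Stewart's criterion says nothing against them. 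This step is not optional. The paper omits it (except for the $P=1$ case at $j=2$), and as a consequence the paper's own families for $j=2,3,6$ contain representable members: $83=9^2+2$ with $83\equiv 11\pmod{24}$, $38=6^2+2$ with $38\equiv 14\pmod{24}$, and $146=12^2+2$ with $146\equiv 2\pmod{24}$ and $145$ not a square, so the paper's claim that \emph{every} member of those progressions is non-representable is false, even though the theorem itself survives. Your sparsity argument --- the exceptional sets $\{N: N-2^b=x^2 \text{ for some } x\}$, $0\le b\le B$, have $O(\sqrt{X})$ elements up to $X$, while the admissible progression has $\gg X$ --- is exactly what is needed to discard these members, so your proof is complete where the paper's is not.
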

\begin{proof}
Let $n$ be a natural number $>1$ of the form $8k+1$. Let $m=\F{\log_2\sqrt{8k+1}}$ (this implies $2^{2m}\leq 8k+1<2^{2m+2}$), then as $m\geq 1$, from Lemma 3.2 there exists $1\leq x\leq 2^m-1$ such that $x^2\equiv 8k+1\modd{2^{m+2}}$. Therefore,  $8k+1-x^2=2^{m+2}s$ for some positive integer $s$ (observe that, $s$ is a positive integer as $x^2<2^{2m}\leq 8k+1$). As $2^{m+2}s\leq 8k+1\leq 2^{2m+2}$ we have $s\leq 2^m$. Hence from Lemma 3.1, $2^{m+2}s$ is a practical number and $8k+1= x^2+2^{m+2}s$ is sum of a square and a practical number. This proves the first part of the theorem. 

Let us now prove the second part by  considering each $j\in \{0,\cdots, 7\}\setminus\{1\}$ separately.

\begin{enumerate}
    \setlength\itemsep{0.5em}
    
    \item[($j=0$):]
    Consider numbers $m$ such that $m\equiv 24 \modd {32}$, $m\equiv 2 \modd 3$, $m\equiv 2\modd 5$, $m\equiv -1 \modd 7$, $m\equiv -1\modd {11}$ and $m\equiv 2 \modd {13}$. Then $m$ is of the form $8k$ and $m\equiv x^2 \modd {16}$, $m\equiv x^2 \mod 3$, $m\equiv x^2 \modd 5$, $m\equiv x^2 \modd 7$, $m\equiv x^2 \modd {11}$ and $m\equiv x^2\modd {13}$ have no solutions. Hence if $m=n^2+P$ then $16\nmid P$, $3\nmid P$, $5\nmid P$, $7\nmid P$, $11\nmid P$ and $13\nmid P$. Highest power of $2$ dividing $P$ is less than or equal to $8$, and $P$ is not divisible by any prime less than or equal to $\sigma(8)+1=16$. Hence $P$ cannot be a practical number and $m$ cannot be written as sum of a square and a practical number. Hence there are infinitely many numbers of the form $8k$ which cannot be written as sum of a square and a practical number.
    
    \item[($j=2$):]
    Consider numbers $m$ of the form $24n+2$ such that $24n+1$ is not a perfect square. Then $m\equiv 2 \modd 8$, $m\equiv x^2 \modd 4$ and $m\equiv x^2 \modd 3$ have no solutions. Hence highest power of $2$ dividing $m-x^2$ for any $x$ is less than or equal to $2$ and since $m-x^2$ is not divisible by any prime less than or equal to $\sigma(2)+1=4$, we can conclude that $m-x^2$ is not a practical number. Hence there are infinitely many numbers of the form $8k+2$ which cannot be written as sum of a square and a practical number.
    
    \item[($j=3$):]
    Consider numbers $m$ of the form $24n+11$. Then $m\equiv 3 \modd 8$ and $m\equiv x^2 \modd 4$, $m\equiv x^2 \modd 3$ have no solutions. Hence highest power of $2$ dividing $m-x^2$ for any $x$ is less than or equal to $2$ and since $m-x^2$ is not divisible by any prime less than or equal to $\sigma(2)+1=4$, $m-x^2$ is not a practical number. Hence there are infinitely many numbers of the form $8k+3$ which cannot be written as sum of a square and a practical number.
    
    \item[($j=4$):]
    Consider numbers $m$ such that $m\equiv 12 \modd {16}$, $m\equiv 2 \modd 3$, $m\equiv 2\modd 5$, $m\equiv -1 \modd 7$, $m\equiv -1\modd {11}$, and $m\equiv 2 \modd {13}$. Then $m$ is of the form $8k+4$ and $m\equiv x^2 \modd {16}$, $m\equiv x^2 \modd 3$, $m\equiv x^2 \modd 5$, $m\equiv x^2 \modd 7$, $m\equiv x^2 \modd {11}$, and $m\equiv x^2\modd {13}$ have no solutions. Hence if $m=n^2+P$ then $16\nmid P$, $3\nmid P$, $5\nmid P$, $7\nmid P$, $11\nmid P$ and $13\nmid P$. Highest power of $2$ dividing $P$ is less than or equal to $8$, and $P$ is not divisible by any prime less than or equal to $\sigma(8)+1=16$. Hence $P$ cannot be a practical number and $m$ cannot be written as sum of a square and a practical number. Hence there are infinitely many numbers of the form $8k+4$ which cannot be written as sum of a square and a practical number.
    
    \item[($j=5$):]
    Consider numbers $m$ such that $m\equiv 5 \modd 8$, $m\equiv 2 \modd 3$, $m\equiv 2\modd 5$, and $m\equiv -1 \modd 7$. Then $m$ is of the form $8k+5$ and $m\equiv x^2 \modd 8$, $m\equiv x^2 \modd 3$, $m\equiv x^2 \modd 5$, and $m\equiv x^2 \modd 7$ have no solutions. Hence if $m=n^2+P$ then $8\nmid P$, $3\nmid P$, $5\nmid P$ and $7\nmid P$. Highest power of $2$ dividing $P$ is less than or equal to $4$, and $P$ is not divisible by any prime less than or equal to $\sigma(4)+1=8$. Hence $P$ cannot be a practical number and $m$ cannot be written as sum of a square and a practical number. Hence there are infinitely many numbers of the form $8k+5$ which cannot be written as sum of a square and a practical number.
    
    \item[($j=6$):]
    Consider numbers $m$ of the form $24n+14$. Then $m\equiv 6 \modd 8$, $m\equiv x^2 \modd 4$ and $m\equiv x^2 \modd 3$ have no solutions. Hence highest power of $2$ dividing $m-x^2$ for any $x$ is less than or equal to $2$ and since $m-x^2$ is not divisible by any prime less than or equal to $\sigma(2)+1=4$, we have $m-x^2$ is not a practical number. Hence there are infinitely many numbers of the form $8k+6$ which cannot be written as sum of a square and a practical number.
    
    \item[($j=7$):]
    Consider numbers $m$ of the form $24n+23$. Then $m\equiv 7 \modd 8$,  and $m\equiv x^2 \modd 4$ and $m\equiv x^2 \modd 3$ have no solutions. Hence highest power of $2$ dividing $m-x^2$ for any $x$ is less than or equal to $2$ and since $m-x^2$ is not divisible by any prime less than or equal to $\sigma(2)+1=4$, $m-x^2$ is not a practical number. Hence there are infinitely many numbers of the form $8k+7$ which cannot be written as sum of a square and a practical number.
\end{enumerate}
\end{proof}
\section{Future prospects}
In this paper, we have given necessary and sufficient conditions for linear and quadratic polynomials to represent infinitely many practical numbers (Theorem 2.2 and Theorem 2.5). It would be worthwhile to explore such necessary and sufficient conditions for cubic and  biquadratic polynomials with integer coefficients.
\section{Disclosure Statement}
No potential conflict of interest was reported by the authors.

\section{Data availability statement}
There is no data associated with this article.


\begin{thebibliography}{9}

\bibitem{Murthy} A.C. Cojocaru, M.R. Murty,
{\it An Introduction to Sieve Methods and Their Applications}, 
vol. {\bf 66}, Cambridge University Press (2005).

\bibitem{Hardy} G. H.Hardy, E. M.Wright, {\it An Introduction to the Theory of Numbers}, 5th ed. Oxford, England: Clarendon Press, 1979.

\bibitem{Margenstern} M. Margenstern, {\it Les nombres pratiques: th´eorie, observations et conjectures}, J. Num-
ber Theory {\bf 37} (1991) 1-36.

\bibitem{Melfi} G. Melfi, {\it On two conjectures about practical numbers}, J. Number Theory 56 (1996)
205–210.

\bibitem{Nagell} T.Nagell, {\it Primes in Special Arithmetical Progressions}, §44 Introduction to Number Theory. New York: Wiley, 1951.

\bibitem{Srinivasan} A. K. Srinivasan, {\it Practical numbers}, Current Sci. 17 (1948), 179–180.

\bibitem{Stewart} B. M. Stewart, {\it Sums of distinct divisors}, Amer. J. Math. 76 (1954) 779–785.


\bibitem{Weingartner} A. Weingartner, {\it Practical numbers and the distribution of divisors}, Q. J. Math. {\bf 66}
(2015), 743–758.

\bibitem{Wu} Wu, XH. {\it Special forms and the distribution of practical numbers}, Acta Math. Hungar. 160, 405–411 (2020).
\end{thebibliography}
\end{document}